\documentclass[twoside,11pt]{amsart}

\usepackage{amsmath,latexsym,amssymb,mathptm, times}
\input amssym.def
\input amssym
\input xypic
\input xy
\xyoption{all}
\setlength{\topmargin}{-.1in} \setlength{\oddsidemargin}{0.3in}
\setlength{\evensidemargin}{0.3in} \setlength{\textheight}{8.5in}
\setlength{\textwidth}{6in}

\def\sqr#1#2{{\vcenter{\hrule height.#2pt
        \hbox{\vrule width.#2pt height#1pt \kern#1pt
                \vrule width.#2pt}
        \hrule height.#2pt}}}

\def\m{{\mathfrak m}}
\def\n{{\mathfrak n}}
\def\q{{\mathfrak q}}
\def\p{{\mathfrak p}}
\def\ms{\medskip}
\def\bs{\bigskip}
\def\s{\smallskip}
\def\ux{{\underline{x}}}
\newtheorem{Theorem}{Theorem}[section]

\newtheorem{Corollary}[Theorem]{Corollary}
\newtheorem{Proposition}[Theorem]{Proposition}

\newtheorem{Remark}[Theorem]{Remark}

\begin{document}

\baselineskip=16pt

\title[Integral extensions and the $a$-invariant]
{\bf Integral extensions and the $a$-invariant}

\author[A. Kustin, C. Polini, and B. Ulrich]
{ Andrew Kustin, Claudia Polini, \and Bernd Ulrich}

\thanks{AMS 2010 {\em Mathematics Subject Classification}.
Primary 13H10; Secondary 13A30, 13B22, 13C40, 13D45.}

\thanks{The first author was partially supported by the NSA.
The second author was partially supported by the NSF and the NSA.
The last author was partially supported by the NSF}

\thanks{The first author is the corresponding author.}

\thanks{Keywords: $a$-invariant, canonical class,  Castelnuovo-Mumford regularity, Hurwitz type theorems, integral extension, minimal multiplicity}

\address{Department of Mathematics, University of South Carolina,
Columbia, SC 29208} \email{kustin@math.sc.edu}

\address{Department of Mathematics, University of Notre Dame,
Notre Dame, Indiana 46556} \email{cpolini@nd.edu}

\address{Department of Mathematics, Purdue University,
West Lafayette, Indiana 47907} \email{ulrich@math.purdue.edu}

\vspace{-0.1in}

\begin{abstract}   In this
note we compare the $a$-invariant  of a homogeneous algebra $B$ to
the $a$-invariant   of a subalgebra $A$. In particular we show that if $A
\subset B$ is a finite homogeneous inclusion of standard graded
domains over an algebraically closed field with $A$ normal and $B$
of minimal multiplicity then $A$ has minimal multiplicity. In some
sense these results are algebraic generalizations of Hurwitz type theorems.
\end{abstract}

\maketitle

\vspace{-0.2in}

\section{Introduction}

The purpose of this note is to study the behavior of the
$a$-invariant under homogeneous inclusions of positively graded
algebras. Thus, let $k$ be a field and $A$ a positively graded
Noetherian $k$-algebra of dimension $d$ and with homogeneous maximal
ideal $\m$. The {\it $a$-invariant} $\, a(A)$ is the top degree of
the local cohomology module $H^d_{\m}(A)$ or, equivalently, the
negative of the initial degree of the graded canonical module
$\omega_A$. If $A$ is standard graded, then the $a$-invariant is
related to the Castelnuovo-Mumford regularity, via the inequality
$a(A) + d \leq {\rm reg} \, A$,
which is an equality in case $A$ is Cohen-Macaulay.

We will consider homogeneous inclusions $A \subset B$ of positively
graded algebras over a perfect field $k$. Our goal is to compare the
$a$-invariants of $A$ and of $B$. Most notably, we wish to bound the
$a$-invariant or the regularity of $A$ by means of the corresponding
invariants of $B$. We now describe what our results say in the
special case where the extension is integral and $B$ is a domain. If
${\rm char} \ k =0$ and $A$ is normal, then the trace map shows that
$A$ is a direct summand of $B$, hence $a(A) \leq a(B)$. This bound
still holds if the extension of quotient fields is separable, but
the estimates change sharply in the inseparable case. In the next
theorem we summarize what we prove, though not in the most general
form.


\begin{Theorem}\label{blah} Let $k$ be a perfect field and
$A \subset B$ a homogeneous integral extension of positively graded
Noetherian $k$-domains. Assume in addition that $A$ is regular in
codimension one.
\begin{enumerate}
\item If the field extension ${\rm
Quot}(A) \subset {\rm Quot}(B)$ is separable, then $a(A) \leq a(B)$.
\item If ${\rm char} \, k = p > 0 \, $, $d=2$, and $A$ is standard
graded, then $\lfloor \frac{a(A)}{p^e} \rfloor \leq a(B)$, where
$p^e$ is the inseparable degree of the field extension; equality
holds if $B$ is regular in codimension one and the field extension
is purely inseparable.
\item If ${\rm char} \, k = p > 0\, $ and $A$, $B$  are standard
graded, then $\lfloor \frac{a(A)+ d -2}{p^e} \rfloor \leq {\rm reg}
\, B -2$, where $p^e$ is the inseparable degree of the field
extension.
\end{enumerate}
\end{Theorem}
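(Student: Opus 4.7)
The three parts are proved in sequence; Part~(1) drives Part~(2), and Part~(3) reduces to Part~(2) by cutting down to dimension two.

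For Part~(1), the plan is to produce a degree-zero injection of graded canonical modules $\omega_A \hookrightarrow \omega_B$, since such an injection forces $\mathrm{indeg}(\omega_B) \leq \mathrm{indeg}(\omega_A)$ and hence $a(A) \leq a(B)$. For the finite extension $A \subseteq B$ one has $\omega_B \cong \mathrm{Hom}_A(B, \omega_A)$ (at least after replacing $A$ by its $S_2$-closure, a change that does not affect the $a$-invariant in view of the $R_1$ hypothesis). The injection is then $w \mapsto (b \mapsto \mathrm{tr}(b)\cdot w)$, where $\mathrm{tr}\colon B \to A$ is the trace of the quotient-field extension. Separability is precisely the condition that makes $\mathrm{tr}$ nonzero, and since $\omega_A$ is torsion-free as a generically rank-one module over the domain $A$, the resulting map is injective.

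For Part~(2), factor $A \subseteq B$ through the intermediate ring $A' = B \cap \mathrm{Quot}(A)^{\mathrm{sep}}$. Then $A \subseteq A'$ has separable quotient-field extension, so Part~(1) applies and yields $a(A) \leq a(A')$. The extension $A' \subseteq B$ is purely inseparable of degree $p^e$, so $B^{p^e} \subset \mathrm{Quot}(A')$ and is integral over $A'$; after possibly enlarging $A'$ inside its normalization, $B^{p^e} \subseteq A'$. The $R_1$ hypothesis on $A$ (inherited by $A'$) together with $\dim = 2$ forces $A'/B^{p^e}$ to be supported only at the irrelevant maximal ideal and hence of finite length, so the long exact sequence in local cohomology yields $H^2_{\m}(A') \cong H^2_{\m}(B^{p^e})$ as graded modules. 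Since Frobenius realizes $B^{p^e}$ as the graded ring $B$ with grading rescaled by $p^e$, we have $a(B^{p^e}) = p^e \cdot a(B)$. Stringing the inequalities together gives $a(A) \leq a(A') = a(B^{p^e}) = p^e\cdot a(B)$, which is equivalent to $\lfloor a(A)/p^e \rfloor \leq a(B)$. The equality clause follows because, when $B$ is itself $R_1$ and the extension is purely inseparable, the inclusion $B^{p^e} \hookrightarrow A$ induces an isomorphism on top local cohomology.

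For Part~(3), reduce to the dimension-two case by cutting both rings with $d-2$ sufficiently generic linear forms $x_1, \ldots, x_{d-2} \in A_1$, producing $\bar A = A/(\ux)A$ and $\bar B = B/(\ux)B$ of dimension two. Standard bookkeeping with the short exact sequences $0 \to A(-1) \xrightarrow{x_i} A \to A/x_iA \to 0$ and their local cohomology yields $a(\bar A) \geq a(A) + d-2$, while filter-regularity of the $x_i$ on $B$ gives $\mathrm{reg}(\bar B) \leq \mathrm{reg}(B)$. Applying Part~(2) to $\bar A \subseteq \bar B$ (after checking that the generic $x_i$ preserve enough of the $R_1$ input to re-run the argument) and combining with the universal estimate $a(\bar B) \leq \mathrm{reg}(\bar B) - 2$ gives $\lfloor (a(A)+d-2)/p^e \rfloor \leq \lfloor a(\bar A)/p^e \rfloor \leq a(\bar B) \leq \mathrm{reg}(B) - 2$.

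The main obstacle is Part~(2): establishing the containment $B^{p^e} \subseteq A'$ and verifying that $A'/B^{p^e}$ has finite length requires an essential use of the $R_1$ hypothesis together with the restriction to $d=2$, and one must pass between $A$, its normalization, and $A'$ without disturbing the $a$-invariants that are being compared. Part~(3) inherits this difficulty and additionally demands that the generic hyperplane-section data preserve both the $R_1$-type input and the inseparable degree $p^e$ of the quotient-field extension, so that Part~(2) can be invoked for the sliced rings.
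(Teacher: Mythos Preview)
Your Part~(1) via the trace map is fine and is, in fact, the alternative route the paper records in Remark~2.4 (the paper's primary argument goes through the canonical class $c_A\colon \wedge^d\Omega_{A/k}\to\omega_A$ instead).

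Part~(2), however, has a genuine gap. You set $A'=B\cap K_{\rm sep}$, obtain $B^{p^e}\subset A'$, and then assert that $A'/B^{p^e}$ has finite length. This would require the extension $B^{p^e}\subset A'$ to be birational, i.e.\ $L^{p^e}=K_{\rm sep}$. But one only knows $K_{\rm sep}=K\,L^{p^e}$, and there is no reason for $K\subset L^{p^e}$; already in the purely inseparable case (where $A'=A$) one has $[K:L^{p^e}]=[K:\mathrm{Quot}(A^{(p^e)})]=p^e>1$ whenever $A$ contains a linear form. Thus $A'/B^{p^e}$ has positive rank over $B^{p^e}$ and is not of finite length, so the long exact sequence does not give $a(A')=a(B^{p^e})$. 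In fact your chain of (in)equalities would yield $a(A)\le p^e\,a(B)$, which is strictly stronger than the theorem's bound $\lfloor a(A)/p^e\rfloor\le a(B)$; combined with the equality case of the theorem it would force $p^e\mid a(A)$, which is certainly false in general. The paper avoids this by comparing $B^{p^e}$ not with $A'$ but with the Veronese subring $A^{(p^e)}$: a degree count in the field diagram shows $\mathrm{Quot}(A^{(p^e)})\subset L^{p^e}$, so $B^{p^e}\subset A^{(p^e)}[B^{p^e}]$ is finite \emph{birational} and $A^{(p^e)}\subset A^{(p^e)}[B^{p^e}]$ is separable; Part~(1) then gives $\omega_{A^{(p^e)}}\hookrightarrow\omega_{B^{p^e}}$, and one finishes using $a(A^{(p^e)})=p^e\lfloor a(A)/p^e\rfloor$. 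The floor in the final inequality is not an artifact but reflects the genuine loss in passing to the Veronese.

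For Part~(3) your outline matches the paper's induction on $d$ via a generic hyperplane section, but two of the steps you flag as ``checking'' are substantial: preserving $R_1$ uses a Bertini-type result of Hochster, and bounding the inseparable degree of the sliced extension by $p^e$ requires a separate argument with Fitting ideals of $\Omega_{C/A}$ and of $B/\q$ over the intermediate ring $C=K_{\rm sep}\cap(B/\q)$. Neither follows from general principles.
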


\ms

Notice that the bound in item (3), the inseparable case, is
considerable weaker than the estimate in (1), the separable case,
but it is nevertheless sharp according to (2). No such bounds can be
expected if $A$ is not regular in codimension one, regardless of
characteristic. For instance, let $R=k[x,y]$ be a polynomial ring
over any field and let $B$ be the $n$th Veronese subring of $R$,
i.e., the $k$-subalgebra of $R$ generated by all monomials of degree
$n$. Consider the subring $A=k[x^n,x^{n-1}y,y^n]$ of $B$.  After
regrading, $A \subset B$ is a homogeneous integral extension of
standard graded $k$-algebras. One has $a(B) = -1$, whereas
$a(A)=n-3$, which can be arbitrarily large. The first equality holds
because $B$ is a Veronese subring, whereas the second one follows
from the fact that $A$ is a hypersurface ring and has multiplicity
$n$.

Our proofs are, in some sense, algebraic generalizations of
arguments that can be found in \cite[Chap. IV, Sec. 2]{Ha}, for
instance. To prove part (1) of Theorem~\ref{blah} we produce
an embedding of graded canonical modules $\omega_A \hookrightarrow
\omega_B$. Part (3) is deduced from item (2) using an induction
argument, and (2) in turn follows once we have shown that
$\omega_{A^{(p^e)}} \hookrightarrow \omega_{B^{p^e}}$. Here $B^{p^e}$ denotes
the $p^e$th Frobenius power of $B$ (in other words, 
$B^{p^e}$ is the subring $\{b^{p^e}\mid b\in B\}$ of $B$) and 
$A^{(p^e)}$   the $p^e$th Veronese subring $A$.  Quite generally, if $C$ is a
positively graded Noetherian algebra over a field, then the $n$th
Veronese subring $C^{(n)}$ is defined as the graded subalgebra
$\oplus_i C_{ni}$ of $C$. The formation of Veronese subrings
commutes with taking local cohomology, hence $a(C^{(p^e)})=
p^e\lfloor \frac{a(C)}{p^e} \rfloor$. On the other hand, when $C$ is
a domain and has characteristic $p>0$, then $a(C^{p^e})=p^e a(C)$.
Thus, the inequality of Theorem \ref{blah}(2) follows once we have
established the embedding
$\omega_{A^{(p^e)}} \hookrightarrow \omega_{B^{p^e}}$.


The origin of this note was a problem that came up in our earlier
work \cite{KPU2014}, namely the question of whether minimal
multiplicity descends under integral extensions. Recall that a
Noetherian standard graded domain $A$ over an algebraically closed
field is said to have {\it minimal multiplicity} if its multiplicity
has the smallest possible value, namely the embedding codimension of
$A$ plus one. Minimal multiplicity is equivalent to the bound ${\rm
reg} \, A \leq 1$, which in turn holds if and only if $A$ is
Cohen-Macaulay with $a(A) \leq 1 - d$, where $d = {\rm dim} \ A$.
For $d=2$ the inequality $a(A) \leq 1 -d$ means that $[\omega
_A]_0=0$. On the other hand, the vector space dimension of
$[\omega_A]_0$
is the genus of $\mathcal C = {\rm Proj}(A)$, at least when $\mathcal
C$ is nonsingular. Thus, the homogeneous coordinate ring of a
nonsingular curve in projective space has minimal multiplicity if
and only if the genus of the curve is zero. In this setting, the
descent of minimal multiplicity follows from the classical Hurwitz
Formula that describes the change in genus under finite separable
morphisms of curves. Our results apply to any homogeneous integral
extension $A \subset B$ of standard graded Cohen-Macaulay domains
over an algebraically closed field as long as $A$ is normal, and
they show, in particular, that if $B$ has minimal multiplicity, then
so does $A$. Again, this implication no longer holds without the
normality assumption on $A$. It would be interesting to generalize
the descent of minimal multiplicity from the graded to the local
case.



\bs

\bs

\section{The Separable Case}\ms 

In this section we compare the $a$-invariants of $A\subset B$ when  suitable separability assumptions are in place. 



 \bs


Let $k$ be an infinite perfect field and $A$ a positively graded, reduced, Noetherian, equidimensional
 $k$-algebra of dimension $d$. Then there is a natural homogeneous map
\[
c_A: \wedge^d \Omega_{A/k} \longrightarrow \omega_A \, ,
\]
called the {\it canonical class}, that is an isomorphism off the
singular locus of $A$ (see \cite{El, KW, L}); here $\Omega$ denotes the module of differentials.

\ms

We will often use the following  fact from dimension theory.
\begin{Remark}\label{dim}{\rm If $k$ is a field and $A \subset B$ is an
inclusion of finitely generated $k$-algebras, then ${\rm dim} \,A
\leq {\rm dim} \, B$. To prove this one reduces to the case when $A$ and $B$ are domains; then one uses the fact that the Krull dimension equals the transcendence degree over $k$.} 
\end{Remark}

Recall that if $A$ is a Noetherian ring of dimension $d<\infty$, then $A^{\rm unm}$ stands for the ring $A/\cap Q_i$, where the $Q_i$ are the primary components of $(0)$ of dimension $d$.

\ms
\begin{Theorem} \label{sepcase}
Let $k$ be a perfect field and $A \subset B$ a homogeneous inclusion
of positively graded Noetherian $k$-algebras of dimension $d$.
Assume every minimal prime $\p_{i}$ of $A$ of  dimension $d$ is the
contraction of a minimal prime $\q_{i}$ of $B$ so that the extension
of quotient fields ${\rm Quot}(A/\p_{i}) \subset {\rm
Quot}(B/\q_{i})$ is separable. Further suppose that ${\rm dim} \,
B/(\cap \q_i+JB) \leq d - 2$ for some $A$-ideal $J$ with ${\rm
Sing}(A^{\rm unm}) \subset V(J A^{\rm unm})$. Then there is a
homogeneous embedding of canonical modules
\[
\omega_A \hookrightarrow \omega_B \, .
\]
In particular,
\[
a(A) \leq a(B)
\] and, if $A$ and $B$ are homogeneous and $A$ is Cohen-Macaulay,
\[
{\rm reg} \ A \leq a(B)+ d \leq {\rm reg} \ B \, .
\]
\end{Theorem}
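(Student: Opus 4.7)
The plan is to construct a $B$-linear homogeneous map $\rho:\omega_A\otimes_A B\to \omega_B$ using the natural map of differentials $\Omega_{A/k}\otimes_A B\to\Omega_{B/k}$ together with the canonical classes $c_A$ and $c_B$, and then to restrict scalars to $A$ to obtain the desired embedding $\omega_A\hookrightarrow \omega_B$. Routine reductions allow me to assume $k$ is infinite and that $A=A^{\rm unm}$, $B=B^{\rm unm}$; these replacements preserve $\omega_{(\,\cdot\,)}$, dimension, the separability hypothesis, and the codimension bound on $V(JB)$.

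Taking top exterior powers of the natural map of differentials and composing with $c_B$ yields a graded $B$-linear map
\[
\Phi:\,\wedge^{d}\Omega_{A/k}\otimes_A B\longrightarrow \omega_B.
\]
Alongside it I form the base-changed canonical class $c_A\otimes_A B:\,\wedge^{d}\Omega_{A/k}\otimes_A B\to \omega_A\otimes_A B$. Since $c_A$ is an isomorphism on ${\rm Spec}(A)\setminus V(J)$ (because ${\rm Sing}(A^{\rm unm})\subset V(J)$), the base change $c_A\otimes_A B$ is an isomorphism on ${\rm Spec}(B)\setminus V(JB)$. Thus $\rho_0:=\Phi\circ(c_A\otimes_A B)^{-1}$ is a well-defined graded $B$-linear map between the restrictions of $\omega_A\otimes_A B$ and $\omega_B$ to the open set $V(JB)^c$.

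The hypothesis $\dim B/(\cap\q_i+JB)\le d-2$ gives ${\rm codim}_B V(JB)\ge 2$, and $\omega_B$ is $S_2$ over $B$. Consequently the natural restriction map ${\rm Hom}_B(N,\omega_B)\to{\rm Hom}(N|_{V(JB)^c},\omega_B|_{V(JB)^c})$ is bijective for every coherent $B$-module $N$ (the standard reflection principle $\omega_B\cong j_* j^{*}\omega_B$ for $j:V(JB)^c\hookrightarrow{\rm Spec}(B)$). Applied to $N=\omega_A\otimes_A B$, this extends $\rho_0$ uniquely to a graded $B$-linear map $\rho:\omega_A\otimes_A B\to \omega_B$. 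Precomposing with the graded $A$-linear map $x\mapsto x\otimes 1$ from $\omega_A$ to $\omega_A\otimes_A B$ produces the sought homogeneous $A$-linear map $\omega_A\to\omega_B$.

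To prove injectivity I localize at each minimal prime $\p_i$ of $A$ of dimension $d$ --- the only associated primes of $\omega_A$. There the map becomes the separable inclusion ${\rm Quot}(A/\p_i)\hookrightarrow{\rm Quot}(B/\q_i)$ supplied by the hypothesis, which is nonzero; since $\omega_A$ has no embedded associated primes, injectivity at every minimal prime forces global injectivity. The consequences $a(A)\le a(B)$ and ${\rm reg}\,A\le a(B)+d\le{\rm reg}\,B$ then follow from the graded embedding, the identity $a(C)=-\inf\{i:[\omega_C]_i\neq 0\}$, and the formula ${\rm reg}(C)=a(C)+d$ for $C$ Cohen-Macaulay (together with the general inequality recalled in the introduction). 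The main technical hurdle is the $S_2$-reflexivity extension across $V(JB)$; once that is granted, the remainder of the proof is straightforward functoriality of the canonical class and a generic-isomorphism check on the open set where $c_A$ is invertible.
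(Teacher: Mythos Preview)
Your overall strategy coincides with the paper's: build the comparison map from the natural map $\wedge^d\Omega_{A/k}\to\wedge^d\Omega_{B/k}$ together with the canonical classes $c_A$ and $c_B$, check injectivity at the generic points using separability, and use the $S_2$ property of $\omega_B$ against the codimension-two hypothesis to get the map to land in $\omega_B$. The paper packages the last step by inverting a single homogeneous nonzerodivisor $f\in JJ'\cap A$ (with $J'$ the Jacobian of $B$) and then using ${\rm depth}_{IB}\,\omega_B\ge 2$, which is the same content as your sheafy $j_*j^*\omega_B=\omega_B$ extension.

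There is, however, a real gap in your reduction step. Replacing $B$ by $B^{\rm unm}$ is not enough: the canonical class $c_B$ requires $B$ to be \emph{reduced} as well as equidimensional, and $B^{\rm unm}$ need not be reduced. Moreover the hypothesis bounds $\dim B/(\cap\q_i+JB)$, not $\dim B/JB$, so after passing only to $B^{\rm unm}$ you cannot conclude ${\rm codim}_B V(JB)\ge 2$; and the separability assumption is only given for the specific primes $\q_i$, which may not exhaust the minimal primes of $B^{\rm unm}$. The paper fixes all three issues at once by replacing $B$ with $B/\cap\q_i$ (and $A$ with $A/\cap\p_i$, after first checking that $J\not\subset\p_i$ so that $A^{\rm unm}$ is reduced). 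With that correction your argument goes through; note that then $B_{\p_i}=L_i$ is a field, $(c_B)_{\p_i}$ is an isomorphism, and your injectivity check at $\p_i$ becomes exactly the paper's computation that separability forces $\wedge^d\Omega_{K_i/k}\hookrightarrow L_i\otimes_{K_i}\wedge^d\Omega_{K_i/k}\cong\wedge^d\Omega_{L_i/k}$, which you should state explicitly rather than writing ``the map becomes the separable inclusion.''
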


\begin{proof} Notice that $A/\p_i  \hookrightarrow   B/\q_i$, thus  ${\rm dim} \, B/\q_i =d$ by Remark \ref{dim}. Since  ${\rm dim} \, B/(\cap \q_i+JB) <  {\rm dim} \, B/\q_i$ it follows that $J$ cannot be contained in any $\q_i$, hence is not
in any $p_i$. Therefore $A^{\rm unm}$ is reduced and  $A^{\rm unm}=A/\cap \p_i$. Notice that $\omega_A =\omega_{A^{\rm unm}}$ and $ \omega_{B/\cap\q_i} \hookrightarrow\omega_B $.
Thus we may replace $A \subset B$  by  $A^{\rm unm}= A/\cap \p_i \subset B/\cap \q_i$ to assume that $A$ and $B$ are reduced and  equidimensional with minimal primes $\{\p_i\}$ and $\{\q_i\}$, respectively.
Furthermore  one has ${\rm Quot}(A)=\times K_i \subset {\rm Quot} (B) =\times L_i$, where $K_i={\rm Quot}(A/\p_i) \subset L_i ={\rm Quot} (B/\q_i)$ are separable algebraic field extensions.

Let $J'$ be the Jacobian ideal of the $k$-algebra $B$. By our
assumption on $J$ and since $B$ is reduced, we have that ${\rm dim}
\, B/JJ' \leq d-1$. Thus ${\rm dim }\, A/ ((JJ') \cap A) \leq {\rm
dim}\, B/ JJ' \leq d-1$ where the first inequality obtains
by Remark \ref{dim}. It follows that
the homogeneous ideal $JJ' \cap A$ cannot be in any minimal prime of
$B$ because any such prime contracts to a prime of $A$ of dimension
$d$. Hence by prime avoidance there exists a homogeneous element $f
\in (JJ')\cap A$ that is a non zerodivisor on $B$. Notice that $A_f$
and $B_f$ are regular rings. Hence $(c_A)_f$ is an isomorphism,
which gives a commutative diagram
$$
{ \diagram
\  \wedge^d\Omega_{A/k}  \ar[r]^{c_A} \ \dto^{\rm nat}  & \ \,
 \omega_A \dto|<\hole|<<\bhook^{\rm nat}  \\
\  (\wedge^d \Omega_{A/k})_f \ & \quad \ \,  (\omega_A)_f  \quad  \ar[l]^{\sim}_{(c_A)_f^{-1} }
\enddiagram }
$$
where the right most map is an embedding since $f$ is $A$-regular.
We also have a corresponding diagram for $B$.

Now there is a commutative diagram of natural homogeneous linear maps,
$$
{ \diagram
\ \wedge^d\Omega_{A/k}   \ar[r]^{\hspace{ -.6in}\epsilon}\ \  \dto^{c_A} & B \otimes \wedge^d \Omega_{A/k} \cong \wedge^d(B \otimes_A \Omega_{A/k})  \quad \ar[r]^{\hspace{.8in} \mu}  \  &  \ \,
\wedge^d \Omega_{B/k}  \dto^{ c_B}  \\
 \omega_A \ar[drr]^{\varphi} \dto|<\hole|<<\bhook^{\rm nat}& & \omega_B  \dto|<\hole|<<\bhook^{\rm nat} \\
\  (\omega_A)_f \cong(\wedge^d \Omega_{A/k})_f     \ar[rr]^{\psi} & &  (\wedge^d \Omega_{B/k})_f\cong  (\omega_B)_f
\enddiagram }
$$
where $\psi=(\mu \epsilon)_f$ and $\varphi$ is the restriction of $\psi$.

We show that $\psi$ is injective. Indeed, $A_f$ is a regular ring
and therefore the $A_f$-module $(\wedge^d \Omega_{A/k})_f \cong
\wedge^d \Omega_{A_f/k}$ is projective, hence torsionfree. Thus it
suffices to show that $K \otimes_{A_f} \psi$ is injective for
$K={\rm Quot}(A_f)={\rm Quot}(A)$. Since ${\rm Quot}(A)=\times K_i
\subset L={\rm Quot} (B) =\times L_i$ with $K_i \subset L_i $
separable algebraic field extensions, we have natural isomorphisms
$L_i \otimes_{K_i} \Omega_{K_i/k} \cong \Omega_{L_{i}/k} $ and
therefore $L\otimes_K \Omega_{K/k} \cong \Omega_{L/k}$.  It follows
that $L \otimes \wedge^d \Omega_{K/k} \cong \wedge^d (L \otimes_K
\Omega_{K/k})\cong \wedge^d \Omega_{L/k}$. Clearly $\wedge^d
\Omega_{K/k}$ embeds into $L \otimes_K \wedge^d \Omega_{K/k}$
because $L$ is a faithfully flat $K$-module, for instance. In
summary, we obtain the commutative diagram
$$
{ \diagram \ \wedge^d\Omega_{K/k}   \ar[r]^{K \otimes \psi}\ \
\dto|<\hole|<<\bhook & \wedge^d \Omega_{K \otimes_A
B/k}  \dto  \\
\  L \otimes_K \wedge^d \Omega_{K/k}    \ar[r]^{\sim} & \wedge^d
\Omega_{L/k},
\enddiagram }
$$
which shows that $K \otimes \psi$ is injective.
Hence
indeed, $\psi$ is injective.

Therefore the composition $\varphi$ is a homogeneous embedding of
graded $A$-modules. It remains to show that ${\rm im} \varphi \subset
\omega_B$. However, since ${\rm Sing}(A) \subset V(J)$, some power
$I$ of $J$ annihilates the cokernel of the canonical class $c_A$.
Thus $I\cdot {\rm im} \varphi \subset  \omega_B$ by the commutativity
of the above diagram. On the other hand,
since ${\rm dim} B/I \leq d-2,$ we have ${\rm depth}_{IB} \omega_B
\geq 2 $.  This implies that ${\rm im} \varphi \subset \omega_B$.
 \end{proof}

\ms

\begin{Remark}{\rm The assumption
on $J$ obtains
if $A$ is regular in codimension one and $B$ is integral over $A$.
If $A$ is equidimensional, then the ideal $J$ can be taken to be the Jacobian ideal of $A$, 
that is, the $d$-th Fitting ideal of $\Omega_{A/k}$.}
\end{Remark}

\ms

\begin{Remark}{\rm If $A \subset B$ is an integral extension,
one can also obtain Theorem~\ref{sepcase} using the trace map
$\overline{B} \longrightarrow \overline{A}$ and dualizing this map
into $\omega_A$.}
\end{Remark}

\ms

The assumption that ${\rm dim}\ A ={\rm dim}\ B$ in
Theorem~\ref{sepcase} can be removed if $B$ is standard graded.

 \ms
\begin{Remark}
{\rm Let $k$ be a perfect field and $A \subset B$ a homogeneous
inclusion of positively graded Noetherian $k$-algebras. Assume that
$B$ is standard graded. Further suppose every minimal prime $\p_i$
of $A$ of maximal dimension is the contraction of a minimal prime
$\q_i$ of $B$ of maximal dimension so that the extension of quotient
fields ${\rm Quot}(A/\p_i) \subset {\rm Quot}(B/\q_i)$ is separable.
In addition assume that ${\rm dim} B/(\cap \q_i +JB) \leq {\rm dim}
\ B -2$ for some $A$-ideal $J$ with ${\rm Sing}(A^{\rm unm}) \subset
V(J A^{\rm unm})$. Then there is a homogeneous embedding of
canonical modules
\[ \omega_A({\rm dim} \ A) \hookrightarrow \omega_B ({\rm dim} \
B).
\]
In particular,
\[
a(A) + {\rm dim}\, A \leq a(B) +{\rm dim}\,  B
\] and, if $A$ is standard graded and Cohen-Macaulay, then
\[
{\rm reg} \ A \leq a(B) + {\rm dim} \, B \leq {\rm reg} B.
\]
\begin{proof}We may suppose that $k$ is infinite and as in the proof of Theorem~\ref{sepcase} we reduce to
the case where $A$ and $B$ are reduced and equidimensional with
minimal primes $\{\p_i\}$ and $\{\q_i\}$, respectively.
Write $\delta= {\rm dim} \, B - {\rm dim} \, A$ and let $x_1, \ldots
x_{\delta}$ be general linear  forms in $B$. One has $\delta = {\rm
dim} \, B/\q_i - {\rm dim} \, A/\p_i ={\rm trdeg}_{A/\p_i} \, B/\q_i
$ for each of the finitely many minimal primes $\q_i$. Thus the
images
of $x_1, \ldots, x_{\delta}$ in $B/\q_i$ forms a separating
transcendence basis of ${\rm Quot}(B/\q_i)$ over ${\rm
Quot}(A/\p_i)$. Since $B$ is reduced, this also shows that the
linear forms  $x_1, \ldots, x_{\delta}$ are algebraically
independent over $A$. Therefore $\omega_A=\omega_{A[x_1, \ldots,
x_{\delta}]}(\delta)$. Now, replacing $A$ by the graded polynomial
ring $A[x_1, \ldots, x_{\delta}]$ we may assume that $d= {\rm dim}
\, A={\rm dim} \, B$. Notice that the separability assumption is
preserved by the definition of a separating transcendence basis. The
assertion now follows from Theorem~\ref{sepcase}.
\end{proof} }
\end{Remark}

%


\section{The Two dimensional Case}
\ms

In this section we allow for inseparable extensions, but we require the dimension of the rings to be $2$.

\begin{Theorem}\label{dim2} Let $k$ be a perfect field of positive characteristic $p$ and $A \subset B$ a
homogeneous inclusion of positively graded Noetherian $k$-algebras
of dimension two. Assume $A$ is a domain containing a nonzero linear
form. Further suppose that ${\rm dim} \, B/(\q+J B) \leq 0$ for some
$A$-ideal $J$ with ${\rm Sing}(A) \subset V(J )$ and some minimal
prime $\q$ of $B$ such that $\q \cap A =0$.

The field extension ${\rm Quot}(A) \subset {\rm Quot}(B/\q)$ is
algebraic; write $p^e$ for its inseparable degree.
There is a homogeneous embedding of canonical modules
\[
\omega_{A^{(p^e)}} \hookrightarrow \omega_{B^{p^e}}.
\]
In particular,
\[\left
\lfloor \frac{a(A)}{p^e} \right\rfloor  \leq a(B)
\] and, if $A$ is standard graded and Cohen-Macaulay,
\[\left
\lfloor \frac{{\rm reg } \, A  -2}{p^e}\right \rfloor \leq a(B).
\]
\end{Theorem}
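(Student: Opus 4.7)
The plan is to reduce the inseparable case to Theorem~\ref{sepcase} via an intermediate ring that restores separability of quotient fields, and then to pass to $p^e$-th Veronese subrings. First I would reduce to the case where $B$ is a domain: the hypothesis $\q\cap A=0$ preserves $A\hookrightarrow B/\q$, and the surjection $B^{p^e}\twoheadrightarrow(B/\q)^{p^e}$ (with kernel the minimal prime $\q\cap B^{p^e}$ of $B^{p^e}$) yields $\omega_{(B/\q)^{p^e}}\hookrightarrow\omega_{B^{p^e}}$, reducing the target. So assume $B$ is a domain with $K:={\rm Quot}(A)\subset L:={\rm Quot}(B)$ algebraic of inseparable degree $p^e$.

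Next, introduce the graded subring $C:=A\cdot B^{p^e}$ of $B$ generated by $A$ and the $p^e$-th Frobenius image of $B$. Writing $K_s$ for the separable closure of $K$ in $L$, the extension $L/K_s$ is purely inseparable of degree $p^e$, so $\beta^{p^e}\in K_s$ for every $\beta\in L$. Hence $L^{p^e}\subset K_s$, and therefore ${\rm Quot}(C)=K\cdot L^{p^e}\subset K_s$, which is separable over $K$. The remaining hypotheses of Theorem~\ref{sepcase} for $A\subset C$ are straightforward: both are $2$-dimensional, and Remark~\ref{dim} applied to $C\subset B$ gives $\dim C/(0+JC)\leq\dim B/(\q+JB)\leq 0$, with $J$ already detecting ${\rm Sing}(A)$ by assumption. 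Theorem~\ref{sepcase} then produces a homogeneous embedding $\omega_A\hookrightarrow\omega_C$.

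Taking the $p^e$-th Veronese of this embedding, and using that the Veronese functor commutes with local cohomology (hence with canonical modules), I obtain $\omega_{A^{(p^e)}}=(\omega_A)^{(p^e)}\hookrightarrow(\omega_C)^{(p^e)}=\omega_{C^{(p^e)}}$. A grading computation identifies $C^{(p^e)}=A^{(p^e)}\cdot B^{p^e}$: every element of $C$ is a $k$-linear combination of products $a\,b^{p^e}$ with $a\in A$, $b\in B$, of degree $\deg a+p^e\deg b$, and divisibility of this degree by $p^e$ forces $a\in A^{(p^e)}$.

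The final step, which I expect to be the main obstacle, is the embedding $\omega_{A^{(p^e)}\cdot B^{p^e}}\hookrightarrow\omega_{B^{p^e}}$. Here the extension $B^{p^e}\subset A^{(p^e)}\cdot B^{p^e}$ is purely inseparable at the level of quotient fields (every element of ${\rm Quot}(C)$ has $p^e$-th power in $L^{p^e}$), so Theorem~\ref{sepcase} does not apply in this direction and classical trace arguments vanish. I anticipate that the resolution exploits the dimension-$2$ setting and the linear form $t\in A$: under the hypotheses ${\rm Sing}(A)\subset V(J)$ and $\dim B/(\q+JB)\leq 0$, the inclusion $B^{p^e}\hookrightarrow A^{(p^e)}\cdot B^{p^e}$ should become an equality after inverting a suitable homogeneous element, so that an $S_2$-hull or reflexivity argument on the rank-one canonical modules in dimension $2$ extends the local identification to the global embedding. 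Granted this last step, the inequality $\lfloor a(A)/p^e\rfloor\leq a(B)$ follows by comparing initial degrees via $a(A^{(p^e)})=p^e\lfloor a(A)/p^e\rfloor$ and $a(B^{p^e})=p^ea(B)$, and the regularity bound follows from ${\rm reg}\,A=a(A)+2$ in the standard graded Cohen-Macaulay case.
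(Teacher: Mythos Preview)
Your overall architecture matches the paper's: reduce to $B$ a domain, sandwich $A^{(p^e)}\cdot B^{p^e}$ between $A^{(p^e)}$ and $B^{p^e}$, use Theorem~\ref{sepcase} on the separable side, and then compare canonical modules on the other side. (The paper applies Theorem~\ref{sepcase} directly to $A^{(p^e)}\subset A^{(p^e)}[B^{p^e}]$ rather than to $A\subset C$ followed by a Veronese, but your detour is harmless.)

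The genuine gap is exactly the step you flag as the ``main obstacle'': you need $\omega_{A^{(p^e)}\cdot B^{p^e}}\hookrightarrow\omega_{B^{p^e}}$, and your sketch (``equality after inverting a suitable element, then an $S_2$/reflexivity argument'') is not yet an argument. What is actually required---and what the paper proves---is that the finite extension $B^{p^e}\subset A^{(p^e)}[B^{p^e}]$ is \emph{birational}, i.e.\ ${\rm Quot}(A^{(p^e)})\cdot L^{p^e}=L^{p^e}$. Once this is known, the embedding of canonical modules is the standard fact for finite birational extensions, and no $S_2$-hull is needed.

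The birationality is where the two hypotheses you have not yet used---dimension $2$ and the nonzero linear form $x\in A_1$---enter, via a field-degree count. First, since $k$ is perfect and ${\rm trdeg}_k L=2$, one has $L=L^{p^e}(u,v)$ for a separating transcendence basis $u,v$, so $[L:L^{p^e}]\le p^{2e}$. Second, the powers $1,x,\dots,x^{p^e-1}$ are linearly independent over $B^{(p^e)}\supset A^{(p^e)}[B^{p^e}]$ by degree reasons, so $[KL^{p^e}:{\rm Quot}(A^{(p^e)})L^{p^e}]\ge p^e$. Combining this with $[L:KL^{p^e}]=[L:K_{\rm sep}]=p^e$ gives $[L:{\rm Quot}(A^{(p^e)})L^{p^e}]\ge p^{2e}$; since $L^{p^e}\subset{\rm Quot}(A^{(p^e)})L^{p^e}$ and $[L:L^{p^e}]\le p^{2e}$, equality ${\rm Quot}(A^{(p^e)})L^{p^e}=L^{p^e}$ is forced. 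This is the missing idea; without it your final embedding does not follow, since a merely purely inseparable (but non-birational) finite extension $S\subset T$ need not satisfy $\omega_T\hookrightarrow\omega_S$.
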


\begin{proof} As $A \hookrightarrow B/\q$,
Remark \ref{dim} gives that ${\rm dim} \, B/\q \geq {\rm dim} \, A$,
and therefore ${\rm dim} \, B /\q =2={\rm dim} \, A$. In particular,
the field extension ${\rm Quot}(A) \subset {\rm Quot}(B/\q)$ is
algebraic. Since furthermore $\omega_{B/\q} \hookrightarrow
\omega_{B}$ we may replace $B$ by $B/\q$ to assume that $B$ is a
domain. Write $K={\rm Quot}(A)$, $L={\rm Quot}(B)$, and let $K_{\rm
sep}$ be the separable closure of $K$ in $L$. One has $L^{p^e}
\subset K_{\rm sep}$, which gives $K_{\rm sep}=KL^{p^e}$ because the
extension $KL^{p^e} \subset L$ is purely inseparable. Since $k$ is
perfect and $L$ has transcendence degree $2$ over $k$, there exist
elements $u, v$ in $L$  so that the extension $k(u,v) \subset L$ is
separable  algebraic. Therefore $L=k(u,v)L^{p^e}=L^{p^e}(u,v)$,
where the last equality uses the perfection of $k$ again. We
conclude that the extension $L^{p^e} \subset L$ has degree at most
$p^{2e}$. Let $x$ be a nonzero linear form in $A$. Notice that the
set $\{x^i \mid 0 \leq i \leq p^e-1\}$ forms a  basis of $K$ over
${\rm Quot}(A^{(p^e)})$ and is linearly independent over the ring
$B^{(p^e)}$, which in turn contains $A^{(p^e)}[B^{p^e}]$. Thus the
purely inseparable field extensions ${\rm Quot}(A^{(p^e)}) \subset
K$ and ${\rm Quot}(A^{(p^e)})L^{p^e} \subset KL^{p^e}$ have degrees
$p^e$ and $\geq p^e$, respectively. We summarize our findings in the
following diagram of field extensions and their respective degrees,
\begin{equation}\label{dia1}
{ \diagram
K    \rto|<\hole|<<\ahook_{\rm sep.} & K_{\rm sep}=KL^{p^e}   \rto|<\hole|<<\ahook^{\ \ \qquad  p^e} &  L  \\
{\rm Quot}(A^{(p^e)}) \rto|<\hole|<<\ahook  \uto|<\hole|<<\ahook_{p^e} & {\rm Quot}(A^{(p^e)})L^{p^e} \uto|<\hole|<<\ahook^{\rm p. insep.}_{\geq p^e} &\\
&  L^{p^e}    \uto|<\hole|<<\bhook   \uurto|<\hole|<<\bhook_{\leq p^{2e}} &
\enddiagram }
\end{equation}

\s \noindent Calculating inseparable degrees in the square on the
left shows that the extension ${\rm Quot}(A^{(p^e)}) \subset {\rm
Quot}(A^{(p^e)})L^{p^e}$ is separable, and computing field degrees
along the triangle on the right hand side we see that $L^{p^e}  =
{\rm Quot}(A^{(p^e)})L^{p^e} $.

Since the extension $A^{(p^e)}[B^{p^e}]\subset B$ is integral we
have that $J^{p^e}$ generates an ideal of dimension at most zero in
$A^{(p^e)}[B^{p^e}]$. Now applying Theorem~\ref{sepcase} to the
inclusion $A^{(p^e)} \subset A^{(p^e)}[B^{p^e}]$ yields an embedding
of canonical modules $\omega_{A^{(p^e)}} \hookrightarrow
\omega_{A^{(p^e)}[B^{p^e}]}$. On the other hand, since $B^{p^e}
\subset A^{(p^e)}[B^{p^e}]$ is a finite and birational extension we
obtain an inclusion $\omega_{A^{(p^e)}[B^{p^e}]}\hookrightarrow
\omega_{B^{p^e}}$.

Finally, we prove the inequality $\lfloor \frac{a(A)}{p^e} \rfloor
\leq a(B).$
One has $a(A^{(p^e)})=p^e \lfloor \frac{a(A)}{p^e}\rfloor,  $ since
the local cohomology functors commute with the formation of Veronese
submodules. On the other hand, $a(B^{p^e} )= p^e a(B)$.
\end{proof}

\ms
The next proposition shows that the estimate of Theorem~\ref{dim2} is sharp
and that the stronger inequality of Theorem~\ref{sepcase} is not valid
without the separability assumption.
\s

\begin{Proposition}
Let $k$ be a perfect field of positive characteristic $p$ and $A
\subset B$ a homogeneous integral extension of positively  graded
Noetherian $k$-domains of dimension two. Assume $A$ is normal and
contains a nonzero linear form. Further suppose that the field
extension ${\rm Quot}(A) \subset {\rm Quot}(B)$  is purely
inseparable of degree $p^e$. Then $B^{p^e} \subset A^{(p^e)}$ is a
finite birational extension. In particular, $B$ is normal if and
only if $B^{p^e} =A^{(p^e)}$, in which case
\[\left\lfloor \frac{a(A)}{p^e} \right\rfloor  = a(B).
\]
\end{Proposition}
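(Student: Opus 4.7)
The plan is to prove the containment $B^{p^e} \subset A^{(p^e)}$ first, upgrade it to a finite birational extension, deduce the normality equivalence, and conclude with the $a$-invariant identity.

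First, I would establish $B^{p^e} \subset A^{(p^e)}$. The hypothesis that $K := {\rm Quot}(A) \subset L := {\rm Quot}(B)$ is purely inseparable of degree $p^e$ forces $L^{p^e} \subset K$, so every element of $B^{p^e}$ lies in $K$. Since $B$ is integral over the normal ring $A$, such elements must actually lie in $A$. Because homogeneous elements of $B^{p^e}$ have degree divisible by $p^e$, this places $B^{p^e}$ inside $A^{(p^e)}$.

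Next, I would verify that the extension $B^{p^e} \subset A^{(p^e)}$ is both finite and birational. For birationality, the argument used in the proof of Theorem~\ref{dim2} with the basis $\{x^i : 0 \leq i \leq p^e-1\}$ shows $[K : {\rm Quot}(A^{(p^e)})] = p^e$. Independently, since $k$ is perfect and $L$ has transcendence degree two over $k$, one has $[L : L^{p^e}] = p^{2e}$, which combined with $[L : K] = p^e$ gives $[K : L^{p^e}] = p^e$. The chain $L^{p^e} = {\rm Quot}(B^{p^e}) \subset {\rm Quot}(A^{(p^e)}) \subset K$ therefore collapses by multiplicativity of degrees, and ${\rm Quot}(A^{(p^e)}) = L^{p^e}$. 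For finiteness, each $a \in A^{(p^e)}$ satisfies $X^{p^e} - a^{p^e} = 0$ over $B^{p^e}$ because $a^{p^e} \in A^{p^e} \subset B^{p^e}$; hence $A^{(p^e)}$ is integral over $B^{p^e}$, and being a finitely generated $k$-algebra it is in fact module-finite over $B^{p^e}$.

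The normality equivalence then follows quickly. The Veronese subring $A^{(p^e)}$ is normal because $A$ is, and the Frobenius $B \to B^{p^e}$, $b \mapsto b^{p^e}$, is a ring isomorphism since $B$ is a domain of characteristic $p$, so $B$ is normal exactly when $B^{p^e}$ is. If $B^{p^e} = A^{(p^e)}$, normality of $A^{(p^e)}$ transfers to $B^{p^e}$, hence to $B$. Conversely, if $B$ is normal, then $B^{p^e}$ is normal and coincides with its integral closure in its own quotient field, which forces the integral, birational extension $A^{(p^e)}$ into $B^{p^e}$, giving equality. Finally, under $B^{p^e} = A^{(p^e)}$, the identities $a(A^{(p^e)}) = p^e \lfloor a(A)/p^e \rfloor$ and $a(B^{p^e}) = p^e a(B)$, both invoked in the proof of Theorem~\ref{dim2}, immediately yield $\lfloor a(A)/p^e \rfloor = a(B)$. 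I expect the main technical point to be the field-degree bookkeeping identifying ${\rm Quot}(A^{(p^e)})$ with $L^{p^e}$; the remainder is a routine assembly of the Frobenius isomorphism with the fact that a normal domain equals its integral closure in its fraction field.
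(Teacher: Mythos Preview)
Your proposal is correct and follows essentially the same route as the paper. The paper's proof establishes $B^{p^e}\subset A^{(p^e)}$ by the identical normality argument, obtains integrality from $A^{(p^e)}\subset B$, and for birationality simply cites the field-degree computation in the proof of Theorem~\ref{dim2} to conclude ${\rm Quot}(A^{(p^e)})L^{p^e}=L^{p^e}$; your degree bookkeeping is precisely that computation specialized to the purely inseparable case (and note that the paper only records $[L:L^{p^e}]\le p^{2e}$, which already suffices for your chain argument). The paper leaves the normality equivalence and the $a$-invariant identity as immediate consequences, which you spell out correctly.
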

\begin{proof} Write $K={\rm Quot}(A)$ and $L={\rm Quot}(B)$.
One has $B^{p^e} \subset L^{p^e} \subset K$ and therefore $B^{p^e}
\subset K \cap B \subset A$ where the last inclusion obtains because
$B$ is integral over $A$ and $A$ is normal. It follows that $B^{p^e}
\subset A^{(p^e)}$. This extension is integral since $A^{(p^e)}
\subset B$, and it is birational because the proof of
Theorem~\ref{dim2} shows that ${\rm Quot} (A^{(p^e)})L^{p^e}=
L^{p^e},$ hence ${\rm Quot} (A^{(p^e)})\subset L^{p^e}.$
\end{proof}

\bs

\section{The General Case}
\ms

In this section we continue our treatment of possibly inseparable extensions in positive characteristic. We allow the dimensions of the rings to be arbitrary, but we do not recover the full strength of the result in dimension $2$.
The $t$-th Fitting ideal of a finitely presented $C$-module $M$ is denoted by ${\rm Fitt}_{t}^CM$.

\begin{Theorem}\label{gen} Let $k$ be a perfect field of positive characteristic $p$ and let $A \subset B$ a
homogeneous integral extension of standard graded Noetherian
$k$-algebras. Assume that $A$ is a domain, regular in codimension
one, and write $p^e$ for the inseparable degree of the field
extension ${\rm Quot}(A) \subset {\rm Quot} (B/\q),$ where $\q$ is
some minimal prime of $B$ of maximal dimension. Then
\[\left
\lfloor \frac{a(A)+{\rm dim}\, A -2}{p^e} \right\rfloor \leq {\rm reg} \, B -2.
\]
\end{Theorem}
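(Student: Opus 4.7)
The plan is to argue by induction on $d = {\rm dim}\,A$, with base case $d = 2$ handled by Theorem~\ref{dim2}. For the base case, take $J$ to be the $d$-th Fitting ideal of $\Omega_{A/k}$: since $A$ is regular in codimension one, ${\rm dim}\,A/J \leq 0$, and because $B$ is integral over $A$, ${\rm dim}\,B/(\q + JB) \leq {\rm dim}\,A/J \leq 0$. Theorem~\ref{dim2} then yields $\lfloor a(A)/p^e \rfloor \leq a(B)$. Since $A$ is a domain and $A \subset B$ is integral, ${\rm dim}\,B = {\rm dim}\,A = 2$, so ${\rm reg}\,B \geq a(B) + 2$, completing the base case.

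For the inductive step $d \geq 3$, first reduce to the case that $k$ is infinite (a standard reduction for graded algebras) and that $B$ has positive depth at $\m_B$ (by replacing $B$ by $B/H^0_{\m}(B)$; this preserves the integral extension $A \hookrightarrow B$, the minimal primes of $B$ and the inseparable degree of the relevant fraction field extension, and does not increase ${\rm reg}\,B$, since the local cohomology of $B/H^0_{\m}(B)$ agrees with that of $B$ in positive homological degree). Choose a general linear form $\ell \in A_1$. By Bertini (valid in positive characteristic for the $R_1$ property), $A' := A/\ell A$ is a standard graded $k$-domain of dimension $d-1$ that is regular in codimension one. Since every associated prime of $B$ other than $\m_B$ contracts to a non-maximal prime of $A$ (as $A_+B$ is $\m_B$-primary) and therefore cannot contain $A_1$, graded prime avoidance ensures that $\ell$ is also a non-zerodivisor on $B$. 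Setting $B' := B/\ell B$, we obtain an integral extension $A' \subset B'$ of standard graded $k$-algebras.

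Three ingredients are needed to invoke the induction hypothesis. First, from the short exact sequence $0 \to A(-1) \stackrel{\ell}{\to} A \to A' \to 0$ and its long exact sequence in local cohomology, using $H^d_\m(A') = 0$ and $[H^d_\m(A)]_{a(A)} \neq 0$, one deduces $a(A') \geq a(A) + 1$; the analogous computation for $B$ yields ${\rm reg}\,B' \leq {\rm reg}\,B$. Second, let $\tilde{\p}$ be a prime of $B$ minimal over $\q + \ell B$: by Krull's principal ideal theorem in the domain $B/\q$, $\tilde{\p}/\q$ has height one, whence $\tilde{\p} \cap A = \ell A$, and consequently $\q' := \tilde{\p}/\ell B$ is a minimal prime of $B'$ of maximal dimension $d-1$ with $\q' \cap A' = 0$. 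Third, and most delicately, the inseparable degree $p^{e'}$ of ${\rm Quot}(A') \subset {\rm Quot}(B'/\q')$ satisfies $p^{e'} \leq p^e$. Indeed, $A_{(\ell A)}$ is a discrete valuation ring with fraction field ${\rm Quot}(A)$ and residue field ${\rm Quot}(A')$; any DVR of ${\rm Quot}(B/\q)$ dominating $(B/\q)_{\tilde{\p}/\q}$ extends $A_{(\ell A)}$, and standard ramification theory bounds the inseparable degree of its residue field over ${\rm Quot}(A')$ by $p^e$. Since ${\rm Quot}(B'/\q') = {\rm Quot}(B/\tilde{\p})$ is a subfield of this residue field, its own inseparable degree over ${\rm Quot}(A')$ is also at most $p^e$.

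Applying the induction hypothesis to $A' \subset B'$ (of dimension $d-1$ and inseparable degree $p^{e'}$) yields $\lfloor (a(A') + d - 3)/p^{e'} \rfloor \leq {\rm reg}\,B' - 2$. Combining this with $a(A) + 1 \leq a(A')$, $p^{e'} \leq p^e$, and ${\rm reg}\,B' \leq {\rm reg}\,B$:
\[
\left\lfloor \frac{a(A) + d - 2}{p^e} \right\rfloor \leq \left\lfloor \frac{a(A') + d - 3}{p^e} \right\rfloor \leq \left\lfloor \frac{a(A') + d - 3}{p^{e'}} \right\rfloor \leq {\rm reg}\,B' - 2 \leq {\rm reg}\,B - 2,
\]
completing the induction. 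The main obstacle is the inseparable degree bound $p^{e'} \leq p^e$, which requires a careful ramification-theoretic analysis in positive characteristic; the Bertini step for the $R_1$ property in characteristic $p$, while essentially standard, also warrants care because smoothness-type Bertini statements can fail in positive characteristic.
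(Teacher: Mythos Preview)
Your inductive strategy (reduce to $d=2$ by cutting with a linear form, track $a$-invariant and regularity through the cut) is the same as the paper's. The genuine gap is your justification of $p^{e'}\le p^e$.

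The appeal to ``standard ramification theory'' does not work: a \emph{separable} extension of DVRs can have a \emph{purely inseparable} residue field extension in characteristic $p$. For instance, with $V=\mathbb F_p(u)[[t]]$, $K=\operatorname{Frac}V$, and $L=K(\alpha)$ where $\alpha^p-t\alpha-u=0$, the extension $L/K$ is separable of degree $p$ (the derivative is $-t\ne 0$), yet the residue extension is $\mathbb F_p(u)\subset\mathbb F_p(u^{1/p})$, purely inseparable of degree $p$. So passing from fraction fields to residue fields can \emph{increase} the inseparable degree, and the DVR argument you sketch gives no bound on $p^{e'}$.

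The paper handles this point by a different mechanism. It replaces $k$ by the purely transcendental extension $k(z_1,\ldots,z_n)$ and uses the \emph{generic} linear form $x=\sum z_i\ell_i$; it introduces the intermediate ring $C=K_{\rm sep}\cap(B/\q)$ and the nonzero $C$-ideal $J={\rm Fitt}_0^C(\Omega_{C/A})\cdot{\rm Fitt}_{p^e}^C(B/\q)$. Only finitely many $(d-1)$-dimensional primes of $C$ contain $J$, and genericity of $x$ guarantees that the chosen prime $\q''$ avoids all of them. At such a prime $(\Omega_{C/A})_{\q''}=0$ and $(B/\q)_{\q''}$ is free of rank $p^e$, which forces the reduction $\overline A\subset\overline C$ to be separable and $[\operatorname{Quot}(\overline B/\overline\q):\operatorname{Quot}(\overline C)]\le p^e$. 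This Fitting-ideal control, not a general ramification principle, is what bounds $p^{e'}$.

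Two smaller points. First, your Bertini step asserts that $A/\ell A$ itself is a domain and $R_1$; without an $S_2$ hypothesis on $A$ this can fail, which is why the paper passes to $\overline A=(A/xA)/H^0_\m(A/xA)$ and invokes Hochster's grade-reduction theorem. Second, your final chain uses $\lfloor n/p^e\rfloor\le\lfloor n/p^{e'}\rfloor$ from $p^{e'}\le p^e$, but this monotonicity reverses when $n=a(A')+d-3<0$; the paper also treats $d\le 1$ separately, which absorbs the edge cases.
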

\begin{proof}
We may assume that $k$ is infinite. Write $d={\rm dim}\,  A={\rm
dim} \, B$. We prove the theorem by induction on $d$. If $d \leq 1$
then $A$ is regular, thus $\lfloor \frac{a(A)+{\rm dim}\, A -2}{p^e}
\rfloor =\lfloor \frac{-2}{p^e} \rfloor$. On the other hand, $ {\rm
reg} \, B -2 \geq -1$ unless ${\rm reg} \, B =0$, in which case
$B=A$ and hence $p^e=1$. For $d=2$ the assertion follows from
Theorem~\ref{dim2} because $a(A)+ {\rm dim} \, A \leq {\rm reg} \,
A$ and $a(B) \leq {\rm reg} \, B -2$.

Now assume that $d \geq 3$. Let ${\ell}_1, \ldots, {\ell}_n$ be
linear forms generating the homogeneous maximal ideal of $A$ and let
$z_1, \ldots , z_n$ be indeterminates over $k$. In the ring $A
\otimes _kk(z_1, \ldots , z_n)$ we consider the generic linear form
$x= \sum z_i {\ell}_i$. Without changing the notation introduced in
the theorem, we now replace $k$ by the purely transcendental
extension field $k(z_1, \ldots, z_n)$. Write $\m$ and $\n$ for the
homogeneous maximal ideals of $A$ and $B$, respectively. Notice that
$\sqrt{\m B}=\n$. Consider the induced homogeneous module finite map
of standard graded $k$-algebras
$$
\overline{A}=(A/xA)/H^0_{\m}(A/xA) \longrightarrow \overline{B}=(B/xB)/H^0_{\m}(B/xB).
$$
Both algebras have dimension $d-1$. Since $x$ is a generic linear
form, the proof of \cite[Theorem]{H}
shows that $\overline{A}$ is a domain and is regular in codimension
one. It follows that the above map is an embedding $\overline{A}
\hookrightarrow \overline{B}$. As $d-1> 0$ and $x$ is $A$-regular it
follows that $a(\overline{A})=a(A/xA) \geq a(A)+1$. On the other
hand, the natural map $B  \twoheadrightarrow \longrightarrow
\overline{B}$ factors through the ring $B'=B/H^0_{\m}(B)$, and the
linear form $x$ is regular on the latter. As
$\overline{B}=(B'/xB')/H^0_{\m}(B'/xB')$ it follows that ${\rm reg}
\, \overline{B} \leq {\rm reg} \, B'/xB'= {\rm reg} \, B' \leq {\rm
reg} \, B$. Hence the statement of the theorem  follows from the
induction hypothesis once we have shown  that there exists a minimal
prime $\overline{\q}$ of $\overline{B}$ of maximal dimension so that
the field extension ${\rm Quot}(\overline{A}) \subset {\rm
Quot}(\overline{B}/\overline{\q})$ has inseparable degree at most
$p^e$.

To construct $\overline{\q}$ notice that the $B$-ideal $(\q, xB)$
has dimension $d-1$ and hence so does its saturation $(\q,xB):
\m^{\infty}$. Let $\q'$ be a prime ideal of dimension $d-1$
containing the latter ideal and define $\overline{\q}=
\q'\overline{B}$. Clearly $\overline{\q}$ is a prime ideal of
$\overline{B}$ of maximal dimension and it contains the image of
$\q$. Write $K={\rm Quot}(A)$, $L={\rm Quot}(B/\q)$, let $K_{\rm
sep}$ be the separable closure of $K$ in $L$, and define $C=K_{\rm
sep} \cap (B/\q )$. As $L=K[B/\q]$ one has $K_{\rm sep}={\rm Quot}(
C)$. Thus $B/\q$ is a $C$-module of rank $p^e$. Set $\q''=(\q'/\q)
\cap C$, which is a $(d-1)$-dimensional prime ideal of $C$
containing $x$. Consider the $C$-ideal $J={\rm
Fitt}_0^C(\Omega_{C/A}) \cdot {\rm Fitt}_{p^e}^C(B/\q)$.  This ideal
is not zero because the field extension ${\rm Quot}( A)=K \subset
{\rm Quot} (C)=K_{\rm sep}$ is separable algebraic and because
$B/\q$ has rank $p^e$ as $C$-module. Hence there are at most
finitely many $(d-1)$-dimensional prime ideals containing $J$, and
$x$ cannot be contained in any of them, since $x$ is generic for
$\m$ and $\m$ generates an ideal of dimension $0 < d-1$ in $C$. It
follows that $J$ cannot be in  $q''$,  a $(d-1)$-dimensional prime
containing $x$. Therefore $(\Omega_{C/A})_{\q''}=0$ and
$(B/\q)_{\q''}$ is a free $C_{\q''}$-module of rank $p^e$. Tensoring
with the residue field $k(\q'')$ of $\q''$ we conclude that
$\Omega_{C/A} \otimes_C k(\q'')=0$ and $B/\q \otimes_C k(\q'')$ is a
$k(\q'')$-vector space of dimension $p^e$. Finally, write
$\overline{C}= C/ \q''$, which yields the inclusions of domains
$\overline{A} \subset \overline{C} \subset
\overline{B}/\overline{\q}$. Notice that ${\rm
Quot}(\overline{C})=k(\q'')$. As $\Omega_{{\overline C}/{\overline
A}} \otimes_{\overline C}k(\q'')$ and $\overline{B}/\overline{\q}
\otimes_{\overline C}k(\q'')$ are epimorphic images of $\Omega_{C/A}
\otimes_C k(\q'')$ and $B/\q \otimes_C k(\q'')$, respectively, we
conclude that  $\Omega_{{\overline C}/{\overline A}}
\otimes_{\overline C}{\rm Quot}(\overline{C})=0 \,$ and $\,
\overline{B}/\overline{\q}\otimes_{\overline C}{\rm
Quot}(\overline{C})$  is a ${\rm Quot}(\overline{C})$-vector space
of dimension at most  $p^e$.  Therefore the field extension ${\rm
Quot}(\overline{A}) \subset {\rm Quot}(\overline{C})$ is separable
and the extension ${\rm Quot}(\overline{C}) \subset {\rm Quot}
(\overline{B}/\overline{\q})$ has degree at most $p^e$. It follows
that the inseparable degree of ${\rm Quot}(\overline{A}) \subset{\rm
Quot}(\overline{B}/\overline{\q})$ is at most $p^e$.
\end{proof}

 \bs

 \section{Applications}

\ms

Now we come to the original goal of our work, which is showing that minimal multiplicity descends under integral extensions. 

Let $k$ be a field and $A$ a quasi-standard graded $k$-algebra, by
which we mean a positively graded Noetherian $k$-algebra integral
over a subalgebra generated by linear forms. Write $\m$ for the
homogeneous maximal ideal of $A$. Assume that either $A$ is
Cohen-Macaulay or else $k$ is algebraically closed and $A$ is a
domain. In this case $e(A) \geq {\rm edim} \, A - {\rm dim} \, A +1$,
\cite{A} and \cite[pg.~112]{EG}. If equality holds one says that $A$ has {\it minimal
multiplicity}.  It is known that $A$ has minimal multiplicity if and
only if $A$ is Cohen-Macaulay and $\m ^2 \subset J$ for some (every)
ideal $J$ generated by a linear system of parameters. In case $A$ is
standard graded the following conditions are equivalent as well
\cite[Introduction]{EG}:

\begin{itemize}
\item $A$ has minimal multiplicity
\item ${\rm reg} \ A \leq 1$
\item $A$ is Cohen-Macaulay and $a(A) \leq 1- {\rm dim} \ A$
\item $A$ is Cohen-Macaulay and $\m ^2 \subset J$ for some (every)
ideal $J$ generated by a linear system of parameters.
\end{itemize}

Integral extensions of such algebras are somewhat restricted. Thus
let $k$ be an algebraically closed field and $A$ a standard graded
$k$-domain of minimal multiplicity. The integral closure $\overline
A$ of $A$ is a quasi-standard graded $k$-algebra. Since the
extension $A \subset \overline A$ is birational, one has
$e({\overline A}) \leq e(A) $. Thus one concludes that $${\rm edim}
\, \overline A \leq e( \overline A) + {\rm dim} \, \overline A - 1
\leq e({ A}) + {\rm dim} \, { A}  -1 = {\rm edim} \, { A}
 \, .$$
On the other hand, as $A$ is standard graded, a homogeneous minimal
generating set of the homogeneous maximal ideal of $A$ extends to a
homogeneous minimal generating set of the homogeneous maximal ideal
of $\overline A$.  Thus the inequality ${\rm edim} \, \overline A
\leq {\rm edim} \, {A}$ implies that $A= \overline A$. This recovers
the well known fact that any standard graded domain over an
algebraically closed field is normal, provided it has minimal
multiplicity (much more is true, see for instance
\cite[19.9]{Harris}).

This discussion shows that in the next Corollary it is natural to assume that $A$ is normal. This condition is approximated by the assumptions of Theorems~\ref{sepcase},
\ref{dim2}, or \ref{gen}. 

\ms

\begin{Corollary}
In addition to the assumptions of either Theorems~\ref{sepcase},
\ref{dim2}, or \ref{gen}, suppose that $A$ and $B$ are standard
graded and $A$ is Cohen-Macaulay. Further assume that $B$ is
Cohen-Macaulay or that $k$ is algebraically closed and $B$ is a
domain. If $B$ has minimal multiplicity so does $A$.
\end{Corollary}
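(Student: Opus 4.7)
The approach is to convert the minimal multiplicity hypothesis on $B$ into a numerical bound on $a(B)$ (or on ${\rm reg}\, B$), feed that bound into whichever of the three main theorems applies, and then read the resulting inequality back as minimal multiplicity of $A$. By the equivalences listed in the discussion preceding the corollary, the hypothesis that $B$ has minimal multiplicity forces $B$ to be Cohen-Macaulay and to satisfy $a(B) \leq 1 - {\rm dim}\, B$, equivalently ${\rm reg}\, B \leq 1$. Observe also that in each of Theorems~\ref{sepcase}, \ref{dim2}, \ref{gen} one has ${\rm dim}\, A = {\rm dim}\, B$: it is a hypothesis in Theorem~\ref{sepcase}, it is automatic in Theorem~\ref{dim2} (both equal $2$), and it is forced by integrality in Theorem~\ref{gen}.

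I would then treat the three cases in parallel. Under the hypotheses of Theorem~\ref{sepcase}, the final conclusion of that theorem, combined with $A$ standard graded and Cohen-Macaulay, gives ${\rm reg}\, A \leq {\rm reg}\, B \leq 1$ at once. Under the hypotheses of Theorem~\ref{dim2}, the bound $\lfloor ({\rm reg}\, A - 2)/p^e \rfloor \leq a(B) \leq -1$ together with the elementary observation that $\lfloor x \rfloor \leq -1$ forces $x < 0$ yields ${\rm reg}\, A - 2 < 0$, so ${\rm reg}\, A \leq 1$. Under the hypotheses of Theorem~\ref{gen}, the analogous inequality $\lfloor (a(A) + {\rm dim}\, A - 2)/p^e \rfloor \leq {\rm reg}\, B - 2 \leq -1$ gives $a(A) + {\rm dim}\, A - 2 < 0$, that is, $a(A) \leq 1 - {\rm dim}\, A$.

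Finally, since $A$ is standard graded and Cohen-Macaulay, the equivalences recalled in the preceding discussion translate the resulting bound on ${\rm reg}\, A$ (or $a(A)$) back into the conclusion that $A$ has minimal multiplicity. There is no real obstacle here: the substantive work has already been carried out inside Theorems~\ref{sepcase}, \ref{dim2}, and \ref{gen}, and the only technical point in the proof of the corollary is the routine passage from a floor-function inequality to the corresponding integer inequality.
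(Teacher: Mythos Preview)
Your proposal is correct and follows exactly the same route as the paper, which compresses the whole argument into a single sentence: ``The theorems show that ${\rm reg}\, A \leq 1$ if ${\rm reg}\, B \leq 1$.'' Your case-by-case verification of the floor-function inequalities simply unpacks what the paper leaves implicit.
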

\begin{proof} The theorems show that ${\rm reg} \ A \leq 1$ if
${\rm reg} \, B \leq 1$.
\end{proof}

\ms
In particular, we obtain the following statement that we use in \cite{KPU2014}.

\ms

\begin{Corollary}\label{6.25} Let $k$ be a perfect field and $A\subset B$ homogeneous integral extensions of standard graded $k$-domains. Further assume that $A$ is normal and Cohen-Macaulay. If $B$ has minimal multiplicity, then so does $A$. \end{Corollary}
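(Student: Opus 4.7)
The plan is to derive this from the preceding Corollary, so the task reduces to checking that its hypotheses are satisfied in this streamlined setting. First I would observe that ``$B$ has minimal multiplicity'' automatically forces $B$ to be Cohen-Macaulay with $\mathrm{reg}\,B\le 1$, by the list of equivalent conditions recalled for standard graded algebras. Together with $A$ and $B$ standard graded and $A$ Cohen-Macaulay, this takes care of all the ``structural'' assumptions of the preceding Corollary except for invoking one of Theorems~\ref{sepcase}, \ref{dim2}, or~\ref{gen}.

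Next I would branch on the characteristic. In characteristic zero the finite field extension $\mathrm{Quot}(A)\subset\mathrm{Quot}(B)$ is automatically separable, so I would apply Theorem~\ref{sepcase}: $A$ and $B$ are domains of the same dimension (since the extension is integral), the unique minimal prime $(0)$ of $A$ is the contraction of $(0)\subset B$, separability of residue field extensions is immediate, and the condition on $J$ follows from the Remark after Theorem~\ref{sepcase} because $A$ is normal (hence regular in codimension one) and $A\subset B$ is integral. In positive characteristic $p$, without any separability hypothesis, I would invoke Theorem~\ref{gen} directly: $A$ is a standard graded domain regular in codimension one, $A\subset B$ is an integral extension of standard graded algebras, and one may take $\mathfrak q=(0)$.

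Combining with the preceding Corollary then yields $\mathrm{reg}\,A\le 1$. Explicitly, in the separable case Theorem~\ref{sepcase} gives $\mathrm{reg}\,A\le a(B)+\dim B=\mathrm{reg}\,B\le 1$, while in the inseparable positive characteristic case Theorem~\ref{gen} gives $\lfloor(a(A)+\dim A-2)/p^e\rfloor\le\mathrm{reg}\,B-2\le-1$, forcing $a(A)\le 1-\dim A$ and hence $\mathrm{reg}\,A\le 1$ (using that $A$ is Cohen-Macaulay). Either way $A$ has minimal multiplicity.

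The main obstacle is nothing beyond this hypothesis bookkeeping, since all substantive work has already been carried out in Theorems~\ref{sepcase} and~\ref{gen}. The only real subtlety worth flagging is that Theorem~\ref{gen} is formulated only in positive characteristic, so the split on characteristic is genuinely necessary; and one must remember that normality of $A$ simultaneously supplies the regular-in-codimension-one hypothesis of Theorem~\ref{gen} and, via the Remark, the Jacobian-style hypothesis of Theorem~\ref{sepcase}.
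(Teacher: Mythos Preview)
Your proposal is correct and follows exactly the approach the paper intends: Corollary~\ref{6.25} is stated without proof as an immediate special case of the preceding Corollary, and your write-up simply makes explicit the hypothesis-checking (in particular the case split on characteristic to invoke Theorem~\ref{sepcase} versus Theorem~\ref{gen}, and the use of normality to get regularity in codimension one) that the paper leaves to the reader.
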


\ms
We finish by recording a curious fact. 

\begin{Proposition}\label{6.3}
Let $k$ be a field and $A  \varsubsetneq B$ a proper homogeneous
integral extension of positively graded Cohen-Macaulay $k$-algebras.
If $A$ is a standard graded ring of minimal multiplicity then $B/A$
is a maximal Cohen-Macaulay $A$-module.
\end{Proposition}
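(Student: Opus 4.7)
The plan is to show $\mathrm{depth}_A(B/A) = d$, where $d = \dim A = \dim B$, by induction on $d$. After a faithfully flat extension of scalars we may assume $k$ is infinite, so that linear forms avoiding any prescribed finite family of primes are available. The base case $d = 0$ is trivial, since every finitely generated module over an Artinian ring has depth zero.

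For the inductive step, take $d \geq 1$ and pick a linear form $\ell \in A_1$ outside every minimal prime of $A$; such an $\ell$ exists because $A_1$ is not contained in any minimal prime (otherwise the corresponding $A/\mathfrak p$ would have dimension $0$). Since $B$ is integral over $A$, its minimal primes contract to those of $A$, so $\ell$ also avoids every minimal prime of $B$. Cohen--Macaulayness of $A$ and $B$ then forces $\ell$ to be a non-zero-divisor on each. Applying the snake lemma to multiplication by $\ell$ on the sequence $0 \to A \to B \to B/A \to 0$ shows that $\ell$ is regular on $B/A$ as well, and produces the exact sequence
\[
0 \longrightarrow A/\ell A \longrightarrow B/\ell B \longrightarrow (B/A)/\ell(B/A) \longrightarrow 0.
\]

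Next, I verify that $A' := A/\ell A \subsetneq B' := B/\ell B$ satisfies the hypotheses of the proposition in dimension $d-1$: both rings are positively graded Cohen--Macaulay of dimension $d-1$, the inclusion $A' \subset B'$ remains homogeneous and integral, and $A'$ stays standard graded. Properness is forced by graded Nakayama applied to the positively graded finitely generated $A$-module $B/A$: if $A' = B'$ then $B = A + \ell B$, whence $B = A$, contradicting $A \subsetneq B$. The essential point is that $A'$ retains minimal multiplicity. This follows from the Hilbert series identity $H_{A/\ell A}(t) = (1-t) H_A(t)$ (so that multiplicity is unchanged under quotient by a linear non-zero-divisor) together with the fact that embedding dimension and Krull dimension each drop by one, transporting the equality $e(A) = \mathrm{edim}(A) - d + 1$ to $A'$.

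By induction applied to $A' \subsetneq B'$, the module $B'/A' = (B/A)/\ell(B/A)$ is maximal Cohen--Macaulay over $A'$, hence of depth $d-1$ over $A$. Since $\ell$ is a non-zero-divisor on $B/A$, the standard identity $\mathrm{depth}_A(B/A) = 1 + \mathrm{depth}_A((B/A)/\ell(B/A))$ gives $\mathrm{depth}_A(B/A) = d$, so $B/A$ is maximal Cohen--Macaulay over $A$. The main conceptual obstacle is recognizing that minimal multiplicity passes to $A/\ell A$ for an arbitrary linear non-zero-divisor $\ell$, which is what keeps the inductive hypothesis alive; the main bookkeeping obstacle is verifying that the reduced inclusion $A' \subsetneq B'$ is still proper, which graded Nakayama handles cleanly.
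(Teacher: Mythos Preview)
Your inductive scheme has a genuine gap at the snake-lemma step. From $0 \to A \to B \to B/A \to 0$ and multiplication by $\ell$, the snake lemma yields
\[
0 \longrightarrow (0:_{B/A}\ell) \longrightarrow A/\ell A \longrightarrow B/\ell B \longrightarrow (B/A)/\ell(B/A) \longrightarrow 0,
\]
so $\ell$ is regular on $B/A$ if and only if $A/\ell A \to B/\ell B$ is injective, i.e.\ $\ell B\cap A=\ell A$. This equality does \emph{not} follow from regularity of $\ell$ on $A$ and on $B$; it is exactly the place where the minimal-multiplicity hypothesis must be used. In your argument minimal multiplicity is invoked only to transport the hypothesis to $A'=A/\ell A$, never to secure the reduction itself, so the sentence ``the snake lemma \ldots\ shows that $\ell$ is regular on $B/A$'' is assuming the very injectivity that needs proof.

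One might try to repair this for $d\ge 2$ by also choosing $\ell$ outside ${\rm Ass}_A(B/A)$; that is possible there, since the depth lemma applied to $0\to A\to B\to B/A\to 0$ gives ${\rm depth}_A(B/A)\ge d-1\ge 1$. But the induction then stalls at $d=1$, where the depth lemma yields nothing and one must verify $\ell B\cap A=\ell A$ by hand. The paper does precisely this, for a full linear system of parameters $J=(x_1,\ldots,x_d)$ at once: a homogeneous $\alpha\in JB\cap A$ of degree one lies in $J$ because $B_0=k=A_0$, while one of degree at least two lies in $\m^2\subset J$ by minimal multiplicity. That two-line degree argument is the substantive content of the proof, and your induction does not circumvent it.
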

\begin{proof} We may assume that $k$ is infinite. Let $\m$ be the homogeneous maximal ideal of
$A$. Let $\ux=x_1, \ldots ,x_d$ be linear forms that are a system of
parameters of $A$ and let $J$ be the $A$-ideal they generate. Write
$C=B/A$. To show that $\ux$ form a regular sequence on $C$ we need
to prove that the first Koszul homology of $\ux$ with coefficients
in $C$ vanishes or, equivalently, ${\rm Tor}_1^A(C,A/J)=0$. The
vanishing of Tor is equivalent to the equality $JB\cap A =J$, as can
be seen from the exact sequence
\[
0={\rm Tor}_1^A(B, A/J) \longrightarrow {\rm Tor}_1^A(C,A/J)
\longrightarrow A/J \longrightarrow B/JB \longrightarrow C/JC
\longrightarrow 0.
\]
Thus let $\alpha$ be a homogeneous element of $JB \cap A$. If
$\alpha$ has degree one then $\alpha \in J$ because $B_0=k=A_0$. If
on the other hand, $\alpha$ has degree at least two, then $\alpha
\in \m^2 \subset J$, where the last inclusion holds because $A$ has
minimal multiplicity.
\end{proof}
 \ms


\bs

\end{document}